\newtheorem{theorem}{Theorem}[section]
\newtheorem{definition}{Definition}[section]
\newtheorem{lemma}{Lemma}[section]
\newtheorem{example}{Example}[section]
\newsavebox{\@brx}
\newcommand{\llangle}[1][]{\savebox{\@brx}{\(\m@th{#1\langle}\)}%
	\mathopen{\copy\@brx\kern-0.5\wd\@brx\usebox{\@brx}}}
\newcommand{\rrangle}[1][]{\savebox{\@brx}{\(\m@th{#1\rangle}\)}%
	\mathclose{\copy\@brx\kern-0.5\wd\@brx\usebox{\@brx}}}
\begin{document}
	\title{Fixed Point for Uniformly Local Asymptotic Nonexpansive Map}
	\author{
		Pallab Maiti\footnotemark[2], Asrifa Sultana\footnotemark[1] \footnotemark[2]}
	\date{ }
	\maketitle
	\def\thefootnote{\fnsymbol{footnote}}
	
	\footnotetext[1]{ Corresponding author. e-mail- {\tt asrifa@iitbhilai.ac.in}}
	\noindent
	\footnotetext[2]{Department of Mathematics, Indian Institute of Technology Bhilai, Raipur - 492015, India.
	}
	
	
		
	\begin{abstract}
	Fixed points for uniformly local asymptotic nonexpansive maps are discussed in this article. An approximate fixed point sequence for such a map over a uniformly convex Banach space is derived. At the end, we study the unique fixed point for uniformly local asymptotic contraction.
	\end{abstract}
	{\bf Keywords:}
	Asymptotically nonexpansive map, fixed points, Uniformly convex Banach space, Uniformly local map.\\
	{\bf Mathematics Subject Classification:}
	47H10
	
\section{Introduction}\label{sec1}

In the year 1965, Browder \cite{brow} extended the famous Banach contraction principle for nonexpansive maps \cite{brow}. A map that satisfies the Lipschitz condition is called nonexpansive whenever the Lipschitz constant is equal to one. In the year 1972, Goebel and Kirk \cite{goebel} improved the Browder's \cite{brow} result through the perception of asymptotically nonexpansive mappings \cite{goebel}. A self map $T$ over a non-void subset $Q$ of a normed linear space $U$ is asymptotically nonexpansive, if for all $p,q\in Q$ and any $n$ lies in $\mathbb{N}$,
\begin{equation}
	\|T^np-T^nq\|\leq \beta_n\|p-q\|,
\end{equation}
where $\{\beta_n\}_n$ is a sequence of positive real numbers having $\lim_{n\to \infty}\beta_n=1$. The authors \cite{goebel} established that if $T$ is asymptotically nonexpansive map over a non-void closed convex bounded subset of a uniformly convex Banach space, then the map $T$ possess a fixed point. There are several generalization of this result, one can found in \cite{khamsi,bose,kozlowski,xu,schu}.

On other side, Edelstein \cite{edel1} generalized the elegant Banach contraction principle for uniformly local contraction \cite{edel1} on a metric space, in the year 1969. A self map $T:(U,\|.\|)\to (U,\|.\|)$ is described as a uniformly local contraction \cite{edel} if for each $p,q\in U$ having $\|p-q\|<r,\,(r>0)$ fulfils
\begin{equation}\label{edel_norm}
	\|Tp-Tq\|\leq \theta \|p-q\|~\textrm{where}~\theta\in [0,1).
\end{equation}
It is worth to note that any mapping which satisfies the classical contraction due to Banach follows from uniformly local contraction due to Edelstein \cite{edel}, whereas the converse statement may not be true. For more results about the presence of fixed points for uniformly local contraction, one may view in \cite{Jach,mam}.

Inspired by these results, in this article, we derive sufficient criteria for the occurrence of fixed points for a uniformly local asymptotic nonexpansive map, which is indeed a generalization of Goebel-Kirk's \cite{goebel} result on a normed linear space. Our main theorem also generalized the Kirk-Xu's \cite[Theorem 3.5]{xu} result about the occurrence of fixed points for pointwise asymptotic nonexpansive map \cite{xu}. We also discussed a new type of the Demiclosedness Principle \cite{kozlowski} for uniformly local asymptotic nonexpansive mappings. The approximate fixed point sequence for uniformly local asymptotic nonexpansive maps are studied. Further, the unique fixed points for uniformly asymptotic local contractions are derived in this article.
\section{Preliminaries}\label{sec2}
This segment contains some mathematical notations and definitions, which are useful up to the end of this paper.

Consider a non-void subset $Q$ of the set $U$. For any bounded sequence $\{q_n\}_n\in U$, we define a map $f:Q\to \mathbb{R}$ by $f(q)=\limsup_{n\to \infty}\|q_n-q\|$ for each $q\in Q$. Then the asymptotic centre \cite{xu} of the sequence $\{q_n\}_n$ relative to $Q$ is defined by $$C_0=\{q\in Q: f(q)=\inf_{z\in Q}f(z)\},$$ and asymptotic radius by $$\rho=\inf\{\limsup_{n\to \infty}\|q_n-y\|:y\in Q\}.$$

The below-stated lemma due to Alfuraidan and Khamsi \cite{khamsi} ensures that the collection $C_0$ is non-void.
\begin{lemma}\cite{khamsi}
	Let $(U,\|.\|)$ be a normed linear space and $Q\subseteq U$ be a non-void convex closed. Suppose that $\{q_n\}_n\in U$ is bounded and $f:Q\to [0,\infty)$ is a mapping defined by 
	\begin{equation}
		f(q)=\limsup_{n\to \infty}\|q_n-q\|~\text{ for each}~ q\in Q.
	\end{equation}
	Then $C_0$ is non-void if $(U,\|.\|)$ is complete and uniformly convex.
\end{lemma}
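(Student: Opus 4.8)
The plan is to exhibit a point of $C_0$ as the unique element of a nested intersection of sublevel sets of $f$, using uniform convexity to force these sublevel sets to shrink to a single point. First I would record the elementary regularity of the functional $f$. Since $\{q_n\}_n$ is bounded, $f$ is finite on all of $Q$, and from the reverse triangle inequality $\big|\|q_n-p\|-\|q_n-q\|\big|\le\|p-q\|$ one obtains, upon taking $\limsup$, that $f$ is $1$-Lipschitz and hence continuous. Likewise, writing $q_n-(tp+(1-t)q)=t(q_n-p)+(1-t)(q_n-q)$ and applying the triangle inequality shows that $f$ is convex. Consequently, for the asymptotic radius $\rho=\inf_{z\in Q}f(z)$ and each $\varepsilon>0$, the sublevel set
\begin{equation*}
C_\varepsilon=\{q\in Q:\ f(q)\le\rho+\varepsilon\}
\end{equation*}
is nonempty (by definition of the infimum), closed (by continuity of $f$ together with closedness of $Q$), and convex (by convexity of $f$ and $Q$); moreover the family $\{C_\varepsilon\}_{\varepsilon>0}$ is nested with $\bigcap_{\varepsilon>0}C_\varepsilon=C_0$.

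The heart of the argument, and the step I expect to be the main obstacle, is to prove that $\mathrm{diam}(C_\varepsilon)\to0$ as $\varepsilon\to0^{+}$; this is where uniform convexity is indispensable. When $\rho=0$ the claim is immediate, since $p,q\in C_\varepsilon$ gives $\|p-q\|\le\limsup_n(\|p-q_n\|+\|q_n-q\|)\le2\varepsilon$. The substantive case is $\rho>0$. Fixing $p,q\in C_\varepsilon$ with $\|p-q\|=d$, I would apply the modulus of convexity $\delta$ to the vectors $q_n-p$ and $q_n-q$, whose difference has norm $d$ and whose midpoint is $q_n-\tfrac{p+q}{2}$. For large $n$ both $\|q_n-p\|$ and $\|q_n-q\|$ are at most $R:=\rho+\varepsilon+\varepsilon'$, so the scaled uniform convexity inequality yields $\|q_n-\tfrac{p+q}{2}\|\le\big(1-\delta(\tfrac{d}{R})\big)R$. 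Taking $\limsup$ in $n$, then letting $\varepsilon'\to0$, and using $f(\tfrac{p+q}{2})\ge\rho$ (as $\tfrac{p+q}{2}\in Q$), I obtain
\begin{equation*}
\rho\le\Big(1-\delta\big(\tfrac{d}{\rho+\varepsilon}\big)\Big)(\rho+\varepsilon),
\end{equation*}
which rearranges to $\delta\big(\tfrac{d}{\rho+\varepsilon}\big)\le\tfrac{\varepsilon}{\rho+\varepsilon}$.

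Since $\delta$ is nondecreasing and strictly positive on $(0,2]$ by uniform convexity, this bound forces $d\to0$ uniformly over $p,q\in C_\varepsilon$ as $\varepsilon\to0^{+}$: were $\mathrm{diam}(C_\varepsilon)$ to remain above some $d_0>0$ along a sequence $\varepsilon\to0$, then for all small $\varepsilon\le\varepsilon_0$ the quantity $\tfrac{d}{\rho+\varepsilon}\ge\tfrac{d_0}{\rho+\varepsilon_0}$, so by monotonicity the left-hand side would stay bounded below by the fixed positive constant $\delta\big(\tfrac{d_0}{\rho+\varepsilon_0}\big)$ while the right-hand side tends to $0$, a contradiction. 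Having established the shrinking-diameter property, I would conclude by the Cantor intersection theorem: because $U$ is complete (hence the closed set $Q$ is complete) and $\{C_\varepsilon\}$ is a nested family of nonempty closed sets with $\mathrm{diam}(C_\varepsilon)\to0$, the intersection $\bigcap_{\varepsilon>0}C_\varepsilon$ consists of a single point $z^{*}$. Finally, $f(z^{*})\le\rho+\varepsilon$ for every $\varepsilon>0$ gives $f(z^{*})\le\rho$, while $f(z^{*})\ge\rho$ by definition of $\rho$; hence $f(z^{*})=\rho=\inf_{z\in Q}f(z)$, so $z^{*}\in C_0$ and $C_0$ is nonvoid (indeed a singleton).
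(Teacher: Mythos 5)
The paper does not actually prove this lemma --- it is imported verbatim from Alfuraidan--Khamsi \cite{khamsi} --- so there is no in-paper argument to compare against; judged on its own, your proof is correct and complete. You take the classical route: $f$ is $1$-Lipschitz and convex, the sublevel sets $C_\varepsilon$ are nonempty closed convex and nested with $\bigcap_{\varepsilon>0}C_\varepsilon=C_0$, uniform convexity (via the modulus $\delta$ applied to $q_n-p$ and $q_n-q$) forces $\operatorname{diam}(C_\varepsilon)\to 0$, and Cantor's intersection theorem in the complete set $Q$ produces the (unique) minimizer. The only cosmetic wrinkle is the step ``letting $\varepsilon'\to 0$'' inside $\delta$, which as literally stated needs continuity of $\delta$; but your subsequent contradiction argument uses only monotonicity and strict positivity of $\delta$ and goes through just as well with $R=\rho+\varepsilon+\varepsilon'$ retained, so this is not a gap. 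For comparison, the source \cite{khamsi} (and the machinery this paper actually sets up in Lemma~2.2, i.e.\ the inequality $\|cp+(1-c)q\|^2\le c\|p\|^2+(1-c)\|q\|^2-c(1-c)\phi(\|p-q\|)$) reaches the same conclusion by showing that any minimizing sequence of the type function $f$ is Cauchy: applying that inequality to the midpoint of $w_k$ and $w_l$ and using $f\ge\rho$ yields $\phi(\|w_k-w_l\|)\to 0$. That variant trades your Cantor-intersection/shrinking-diameter topology for a single quadratic inequality and delivers existence and uniqueness of the asymptotic center in one stroke; your version is more elementary in that it uses only the definition of the modulus of convexity rather than the (nontrivial) characterization of Lemma~2.2. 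Either argument is acceptable.
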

The upcoming lemma consists of a property of uniformly convex Banach space, which is very useful for establishing our main result.
\begin{lemma}\cite{xu}\label{lem1}
	A complete normed linear space $(U,\|.\|)$ becomes a uniformly convex if and only if for every fixed scalar $a>0$, there is a continuous function $\phi:[0,\infty)\to [0,\infty)$ having $\phi^{-1}(0)=\{0\}$ and 
	$$\|cp+(1-c)q\|^2\leq c\|p\|^2+(1-c)\|q\|^2-c(1-c)\phi(\|p-q\|),$$
	where $0\leq c\leq 1$ and $p,q\in U$ fulfils $\|p\|\leq a$, $\|q\|\leq a$.
\end{lemma}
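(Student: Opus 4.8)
The statement is an ``if and only if'', so I would treat the two implications separately and spend almost all the effort on the forward one. The reverse implication (the displayed inequality forces uniform convexity) is routine. I would specialize to $a=1$ and $c=\tfrac12$, so that for $p,q$ in the closed unit ball the inequality reads $\|\tfrac{p+q}{2}\|^2\le 1-\tfrac14\phi(\|p-q\|)$. Given $\varepsilon>0$, any admissible pair with $\|p-q\|\ge\varepsilon$ satisfies $\|p-q\|\in[\varepsilon,2]$; since $\phi$ is continuous with $\phi^{-1}(0)=\{0\}$, it attains a positive minimum $m_\varepsilon>0$ on the compact set $[\varepsilon,2]$, whence $\|\tfrac{p+q}{2}\|\le\sqrt{1-m_\varepsilon/4}<1$. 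This is precisely a positive modulus of convexity $\delta(\varepsilon)$, and as completeness is assumed, $U$ is uniformly convex.

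For the forward implication I would first reduce to $a=1$ by homogeneity: if $\phi_1$ works on the unit ball, then $\phi(s):=a^2\phi_1(s/a)$ works for radius $a$ (substitute $p=ap'$, $q=aq'$ with $\|p'\|,\|q'\|\le 1$ and multiply the unit-ball inequality by $a^2$). On the unit ball I would \emph{define} the candidate directly as the normalized convexity deficit
\[
\phi_1(t)=\inf\left\{\frac{c\|p\|^2+(1-c)\|q\|^2-\|cp+(1-c)q\|^2}{c(1-c)}:\ c\in(0,1),\ \|p\|,\|q\|\le 1,\ \|p-q\|\ge t\right\}.
\]
With this definition the required inequality holds \emph{by construction} (any pair with $\|p-q\|\ge t$ is feasible for $\phi_1(\|p-q\|)$), and $\phi_1$ is nondecreasing. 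Nonnegativity $\phi_1\ge 0$ follows from convexity of $c\mapsto\|cp+(1-c)q\|$ composed with the square, and $\phi_1(0)=0$ is seen by taking $p=q$.

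The heart of the matter, and the step I expect to be the main obstacle, is to prove $\phi_1(t)>0$ for every $t>0$; this is where uniform convexity must enter. I would argue by contradiction: if $\phi_1(t)=0$ there are admissible $c_n,p_n,q_n$ with $\|p_n-q_n\|\ge t$ whose normalized deficit tends to $0$. Splitting the numerator into two nonnegative terms — the Jensen gap $c_n(1-c_n)(\|p_n\|-\|q_n\|)^2$ for the square, plus $(c_n\|p_n\|+(1-c_n)\|q_n\|)^2-\|c_np_n+(1-c_n)q_n\|^2$ — forces, after dividing by $c_n(1-c_n)$, both $\|p_n\|-\|q_n\|\to 0$ and $\|c_np_n+(1-c_n)q_n\|$ to approach the ``triangle value'' $A_n:=c_n\|p_n\|+(1-c_n)\|q_n\|$. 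Since $\|p_n-q_n\|\ge t$ keeps the norms bounded below by roughly $t/2$, the normalizations $\hat p_n=p_n/\|p_n\|$, $\hat q_n=q_n/\|q_n\|$ have $\|\hat p_n-\hat q_n\|$ bounded away from $0$, and the elementary convex-combination consequence of uniform convexity,
\[
\|c_n\hat p_n+(1-c_n)\hat q_n\|\le 1-2\min\{c_n,1-c_n\}\,\delta(\|\hat p_n-\hat q_n\|),
\]
obtained by writing the combination through the midpoint, contradicts that near-equality.

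The delicate point in this argument is the degenerate regime $c_n\to 0$ or $c_n\to 1$: there the weight $c_n(1-c_n)\to 0$, and the error of size $\|p_n\|-\|q_n\|$ incurred when replacing $p_n,q_n$ by their normalizations must be shown negligible \emph{relative to} $c_n(1-c_n)$, which does not follow from $\|p_n\|-\|q_n\|\to 0$ alone. Securing the exact weight $c(1-c)$ (equivalently, the linear-in-$c$ behaviour near the endpoints, rather than the lossy $c^2$ that a single midpoint reduction produces) is the genuine technical content here; I would handle it by a sharper simultaneous estimate of $A_n-\|c_np_n+(1-c_n)q_n\|$ that keeps the factor $\min\{c_n,1-c_n\}$ explicit, so that after dividing by $c_n(1-c_n)=\min\{c_n,1-c_n\}\max\{c_n,1-c_n\}$ a bound bounded below by a positive multiple of $t\,\delta(t/2)$ survives. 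Finally, the constructed $\phi_1$ need not be continuous; but since only the \emph{existence} of a continuous $\phi$ is asserted, I would replace it by a continuous minorant (for instance a piecewise-linear function lying below $\phi_1$) that still vanishes only at $0$, which preserves the inequality and completes the proof.
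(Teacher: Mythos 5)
The paper does not prove this lemma at all: it is quoted as a known result of Kirk and Xu (going back to Xu's 1991 inequality $\|\lambda x+(1-\lambda)y\|^p\le\lambda\|x\|^p+(1-\lambda)\|y\|^p-W_p(\lambda)g(\|x-y\|)$) and used as a black box, so there is no in-paper argument to compare yours against. Your plan is essentially that standard proof for the exponent $2$: the reverse direction via the positive minimum of $\phi$ on $[\varepsilon,2a]$ is correct, the homogeneity reduction to $a=1$ is correct, and defining $\phi_1$ as the infimum of the normalized deficit is the right move. The one step you defer --- positivity of $\phi_1(t)$ with the exact weight $c(1-c)$ --- is indeed the crux, but it closes along exactly the lines you indicate, and more easily than you fear. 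Split the deficit as
\[
c\|p\|^2+(1-c)\|q\|^2-\|cp+(1-c)q\|^2=c(1-c)\bigl(\|p\|-\|q\|\bigr)^2+(A+B)(A-B),
\]
with $A=c\|p\|+(1-c)\|q\|$ and $B=\|cp+(1-c)q\|$, and distinguish two cases. If $\bigl|\|p\|-\|q\|\bigr|\ge t/4$, the first term alone is $\ge c(1-c)t^2/16$. Otherwise both norms lie in $[t/4,1]$ (since $\max\{\|p\|,\|q\|\}\ge t/2$), the normalizations satisfy $\|\hat p-\hat q\|\ge 3t/4$, and writing $cp+(1-c)q=A\bigl(\mu\hat p+(1-\mu)\hat q\bigr)$ with $\mu=c\|p\|/A$, the midpoint reduction gives $A-B\ge 2A\min\{\mu,1-\mu\}\,\delta(3t/4)\ge \tfrac{t}{2}\min\{c,1-c\}\,\delta(3t/4)$; since $A+B\ge t/4$ and $\min\{c,1-c\}\ge c(1-c)$ (because $\max\{c,1-c\}\le 1$), the second term is $\ge c(1-c)\tfrac{t^2}{8}\delta(3t/4)$. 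This dissolves the degenerate-$c$ worry entirely: the normalization error is absorbed by the first case, which already carries its own $c(1-c)$ factor, and the passage from $\min$ to the product costs nothing. Your closing remark about replacing the possibly discontinuous (but nondecreasing, vanishing only at $0$) $\phi_1$ by a continuous piecewise-linear minorant is also fine. So there is no gap of substance; just be aware that the present paper supplies no proof here, so the argument should be credited to Xu rather than measured against anything in this text.
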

Let us review the concept of Opial property \cite{opial}, which is needed in the sequel. A complete normed linear space $U$ fulfils the Opial property \cite{opial} if for every sequence $\{q_n\}_n\in U$ converges weakly to $q\in U$, and for each $p\in U$ having $p\neq q$ implies 
\begin{equation*}
	\limsup_{n\to \infty}\|q_n-q\|\leq \limsup_{n\to \infty}\|q_n-p\|.
\end{equation*}
All the Hilbert spaces and $l^p(1\leq p<\infty)$ satisfies this property, where $L^p$ does not fulfil Opial property except $p=2$.  
The below mentioned lemma due to G\'ornicki \cite{gorn} is necessary to establish our result in the sequel.
\begin{lemma}\cite{gorn}\label{lem2}
	Suppose that $Q\subseteq U$ is closed convex and $U$ is uniformly Banach space having Opial's condition. If $\{q_n\}_n\in Q$ converges to $w$ weakly, then $w$ lies in asymptotic centre of $\{q_n\}_n$.
\end{lemma}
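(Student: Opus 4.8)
The plan is to derive the conclusion directly from two ingredients: the fact that a norm-closed convex subset of a Banach space is weakly closed, and the Opial inequality exactly as stated in the excerpt. Recall that, by definition, $w$ lies in the asymptotic centre $C_0$ of $\{q_n\}_n$ relative to $Q$ precisely when $w\in Q$ and $f(w)=\inf_{z\in Q}f(z)$, where $f(q)=\limsup_{n\to\infty}\|q_n-q\|$. So I must verify membership $w\in Q$ first, and then the minimality of $f$ at $w$.

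For the membership, I would invoke Mazur's theorem: since $Q$ is convex and closed in the norm topology, it is weakly closed. The sequence $\{q_n\}_n$ lies in $Q$ and converges weakly to $w$, hence $w\in Q$. This settles the prerequisite for $w$ to be an element of $C_0$. Note also that a weakly convergent sequence is norm-bounded (by the uniform boundedness principle), so each $\limsup_{n\to\infty}\|q_n-q\|$ is finite and $f$ is genuinely well defined on $Q$.

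For the minimality, I would apply the Opial property to the weakly convergent sequence $q_n\rightharpoonup w$. For every $p\in U$ with $p\neq w$ the Opial inequality gives $f(w)=\limsup_{n\to\infty}\|q_n-w\|\leq \limsup_{n\to\infty}\|q_n-p\|=f(p)$, while trivially $f(w)=f(w)$ when $p=w$. Specializing $p$ to range over $Q$ yields $f(w)\leq f(z)$ for all $z\in Q$, so $f(w)\leq \inf_{z\in Q}f(z)$. Since $w\in Q$ the reverse inequality $f(w)\geq \inf_{z\in Q}f(z)$ is automatic, and therefore $f(w)=\inf_{z\in Q}f(z)$. Combined with $w\in Q$, this is exactly the statement that $w\in C_0$.

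I do not expect a genuine obstacle here; the argument is essentially a one-line consequence of the Opial condition once membership is established. The only point that warrants care is the passage from norm-closedness of $Q$ to weak-closedness, which rests on Mazur's theorem and is what actually uses the convexity hypothesis. The uniform convexity assumption is not strictly needed for this particular statement, since the weak limit $w$ is handed to us; it becomes relevant only in the later applications where one must first extract such a weakly convergent subsequence.
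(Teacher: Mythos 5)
Your proof is correct. The paper itself gives no proof of this lemma --- it is quoted from G\'ornicki's article as a known result --- and your argument (Mazur's theorem for $w\in Q$, boundedness of the weakly convergent sequence so that $f$ is well defined, and the Opial inequality to get $f(w)\leq f(z)$ for all $z\in Q$) is exactly the standard derivation and uses the hypotheses in the intended way. Your closing remark is also accurate: uniform convexity plays no role once the weak limit is given; it matters only where one needs the asymptotic centre to be nonempty or a singleton, not for this membership statement.
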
	
\section{Main results}\label{sec3}
We commence this section with the concept of a uniformly local asymptotic nonexpansive map, which is in fact an extension of the asymptotically nonexpansive map \cite{goebel} and we derive fixed points for such mappings on a normed linear space. 
\begin{definition}
	For a normed linear space $(Q,\|.\|)$, a map
	$T:Q\to Q$ is defined as uniformly local asymptotic nonexpansive if for $r>0$ and  for each $p,q\in Q$ having $\|p-q\|<r$ implies,
	\begin{equation}\label{local_nonexpansive}
		\|T^np-T^nq\|\leq \beta_n\|p-q\|~\text{for any}~n\in \mathbb{N},
	\end{equation}
	where $\{\beta_n\}_n$ is a sequence of positive real number converges to $1$.
\end{definition}
In the below, we provide an example of uniformly local asymptotic nonexpansive map and it further shows that uniformly local asymptotic nonexpansive map is indeed an extension of asymptotic nonexpansive map due to Goebel and Kirk \cite{goebel}.
\begin{example}
	Let $V=(l^2,\|.\|_2)$ be a normed linear space and $U=\{u\in l^2:\|u\|_2\leq \frac{1}{2} \}\cup \{(1,0,0,\cdots)\}\subseteq V$. We define a map $T:U\to U$ by 
	\begin{eqnarray*}
		T\left(u_1,u_2,\cdots\right)&=&\left(0,0,u_1^2,b_2u_2,b_3u_3,\cdots\right),\\
		T\left(1,0,0,\cdots\right)&=&\left(\frac{1}{2},0,0,0,\cdots\right),
	\end{eqnarray*}
	where $\{b_n\}_{n\geq 2}$ is a decreasing sequence and for every $n\geq 2$, $b_n\in (0,1)$ with $\prod_{j=1}^{\infty}b_{2j+1}=1$. Now, for each $n\in \mathbb{N}$, $T^n(0,0,\cdots)=(0,0,\cdots)$ and $T^n\left(1,0,0,\cdots\right)=(0,0\cdots,0,\frac{b_{2n-3}b_{2n-5}\cdots b_5b_3}{2^2},0,\cdots)$. Consequently we have, $\|T^n\left(1,0,0,\cdots\right)-T^n(0,0,\cdots)\|_2=\beta_n\|\left(1,0,0,\cdots\right)-(0,0,\cdots)\|_2$, where $\beta_n=\frac{1}{2^2}\prod_{j=1}^{n}b_{2j+1}$ and $n\to \infty$, $\beta_n\not\to 1$. 
	
	Again, for each $u\in U$ and for every positive integer $n$, it occurs that $T^n(u_1,u_2,\cdots)=(0,\cdots,0,b_{2n-1}\cdots b_3u_1^2,b_{2n}\cdots b_2u_2,\cdots )$. Now for any $u,v\in U$ with $\|u-v\|<\frac{1}{2}$ it yields that,
	\begin{equation*}
		\|T^nu-T^nv\|\leq \beta_n\|u-v\|,
	\end{equation*}
	where $\beta_n=\prod_{j=1}^{n}b_{2j+1}$. Hence the map $T$ is a uniformly local asymptotic nonexpansive.
\end{example}

The upcoming theorem ensures the existence of fixed points for the mappings that meets the criteria (\ref{local_nonexpansive}).
\begin{theorem}\label{thm1}
	Let us assume $(U,\|.\|)$ is a uniformly convex Banach space and $Q\subseteq U$ is non-void bounded closed convex. Suppose that $T:Q\to Q$ is a uniformly local asymptotic nonexpansive. Then the occurrence of fixed point for $T$ is guaranteed if there is $q_0\in Q$ in order that the asymptotic radius $\rho$ of $\{T^nq_0\}_n$ is less than $r$.
	
	Furthermore, the collection of fixed points of $T$ is a closed set. 
\end{theorem}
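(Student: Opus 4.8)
The plan is to build an approximate fixed point sequence from the iterates $\{T^nq_0\}_n$ and then locate a genuine fixed point inside the asymptotic centre. Write $C_0$ for the asymptotic centre of $\{T^nq_0\}_n$ relative to $Q$; by Lemma 2.1 (Alfuraidan--Khamsi) $C_0$ is non-void, and since $\rho<r$ we may fix $w\in C_0$ with $f(w)=\rho<r$. The first step is to show $T^m$ maps a neighbourhood of the asymptotic behaviour into itself in a controlled way: for each fixed $m$, apply the uniformly local condition \eqref{local_nonexpansive} to the pairs $(T^nq_0,w)$, which is legitimate once $\|T^nq_0-w\|<r$ for all large $n$ — and this is exactly what $\limsup_n\|T^nq_0-w\|=\rho<r$ delivers. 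Hence for large $n$, $\|T^{n+m}q_0-T^mw\|\le\beta_m\|T^nq_0-w\|$, giving $f(T^mw)=\limsup_n\|T^{n+m}q_0-T^mw\|\le\beta_m\,\rho$.

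Next I would exploit uniform convexity through Lemma \ref{lem1}. The idea is to compare $T^mw$ with $w$ using the midpoint $\tfrac12(T^mw+w)$, which lies in $Q$ by convexity. Using $\|T^{n+m}q_0 - \tfrac12(T^mw+w)\|^2$ and applying the inequality of Lemma \ref{lem1} with $c=\tfrac12$, $p=T^{n+m}q_0-T^mw$, $q=T^{n+m}q_0-w$, one obtains after taking $\limsup_n$ a bound of the form
\begin{equation*}
	\rho^2 \le f\!\left(\tfrac{T^mw+w}{2}\right)^2 \le \tfrac12 f(T^mw)^2 + \tfrac12\rho^2 - \tfrac14\,\phi\!\left(\|T^mw-w\|\right),
\end{equation*}
where the left inequality holds because $\rho$ is the minimal value of $f$ over $Q$. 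Rearranging and inserting $f(T^mw)^2\le\beta_m^2\rho^2$ yields $\tfrac14\phi(\|T^mw-w\|)\le \tfrac12(\beta_m^2-1)\rho^2\to 0$ as $m\to\infty$. Since $\phi^{-1}(0)=\{0\}$ and $\phi$ is continuous, this forces $\|T^mw-w\|\to 0$, so $\{T^mw\}_m$ is an approximate fixed point sequence converging in effect toward fixity. To pass from this to an actual fixed point, I would show $\{T^mw\}$ is Cauchy (or use weak compactness together with Lemma \ref{lem2} and a demiclosedness argument, which the authors flag as forthcoming) and that its limit $w^\ast$ satisfies $Tw^\ast=w^\ast$; the continuity of $T$ on the relevant $r$-neighbourhood, available once the displacements are smaller than $r$, then closes the argument.

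For the furthermore claim, let $F=\{q\in Q: Tq=q\}$ and take a sequence $\{p_k\}\subseteq F$ with $p_k\to p\in Q$ (using closedness of $Q$). For large $k$ we have $\|p_k-p\|<r$, so \eqref{local_nonexpansive} with $n=1$ gives $\|Tp_k-Tp\|=\|p_k-Tp\|\le\beta_1\|p_k-p\|\to 0$, whence $Tp=\lim_k p_k=p$ and $p\in F$. Thus $F$ is closed.

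The main obstacle I anticipate is the passage from the approximate fixed point sequence to an honest fixed point. The displacement estimate $\|T^mw-w\|\to0$ does not immediately yield $Tw=w$, because iterating the local condition requires all the intermediate displacements to stay below $r$, and a single application of $T$ to $w$ need not reduce distance. Controlling this — showing $\{T^mw\}$ is Cauchy so its limit is genuinely fixed, while verifying that every pair invoked in \eqref{local_nonexpansive} stays within the radius $r$ — is the delicate heart of the proof, and is presumably where the hypothesis $\rho<r$ (rather than merely $\rho<\infty$) is essential.
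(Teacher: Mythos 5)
Your argument is correct and rests on the same two pillars as the paper's proof (minimising the type function $f$ over the weakly compact set $Q$, then invoking Lemma \ref{lem1} together with the fact that $\rho<r$ licenses the local condition \eqref{local_nonexpansive} along the orbit), but you deploy the convexity inequality on a different midpoint: you compare $T^mw$ with $w$ via $\tfrac12(T^mw+w)$ and deduce $\phi(\|T^mw-w\|)\le 2(\beta_m^2-1)\rho^2\to 0$, hence $T^mw\to w$; the paper instead compares $T^lw$ with $T^mw$ via $\tfrac12(T^lw+T^mw)$, deduces that $\{T^nw\}_n$ is Cauchy, and then identifies its limit $q^*$ as the fixed point. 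Your variant is slightly cleaner in that it shows the minimiser $w$ itself is the fixed point rather than some a priori different limit $q^*$. The one thing to fix is your closing paragraph: the ``delicate heart'' you worry about is not a gap at all. Once $\|T^mw-w\|\to 0$ you have, for all large $m$, $\|T^mw-w\|<r$, so \eqref{local_nonexpansive} with $n=1$ gives $\|T^{m+1}w-Tw\|\le\beta_1\|T^mw-w\|\to 0$; thus $T^{m+1}w\to Tw$, while also $T^{m+1}w\to w$, and uniqueness of limits yields $Tw=w$. No Cauchy argument, demiclosedness, or control of intermediate displacements is needed — this is exactly the one-step continuity argument the paper itself uses to show $Tq^*=q^*$, and you had in fact already named it (``the continuity of $T$ on the relevant $r$-neighbourhood'') before talking yourself out of it. Your treatment of the closedness of the fixed point set coincides with the paper's.
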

\begin{proof}
	For fixed $q_0$, consider a map $f:Q\to [0,\infty)$ in order that $f(q)=\limsup_{n\to \infty}\|T^nq_0-q\|$ for $q\in Q$. Consequently, we obtain  $w\in Q$ in order that $f(w)=\min_{q\in Q}f(q)$ on the account of $Q$ is weakly compact subset of uniformly convex Banach space $U$. Due to the fact that asymptotic radius corresponding to $\{T^nq_0\}_n$ is less than $r$, then $\limsup_{n\to \infty}\|T^nq_0-w\|<r$. As a consequence we find $N_0$ in $\mathbb{N}$ meets $\|w-T^nq_0\|<r$, for every $n\geq N_0$. Indeed, if it is not hold, then there is $\{T^{n_k}q_0\}_k$ such that $\|T^{n_k}q_0-w\|\geq r$ for each $k\in \mathbb{N}$. Again $\limsup_n\|w-T^nq_0\|\geq \limsup_k \|T^{n_k}q_0-w\|\geq r$, which leads to a contradiction. 
	
	As $q_0,\,w\in Q$ and $Q$ is bounded, then by applying the Lemma \ref{lem1} for $l,m\in \mathbb{N}$  we obtain,
	\begin{multline*}
		\|T^{l+m+n}q_0-\frac{1}{2}(T^lw+T^mw)\|^2\leq\frac{1}{2}\|T^{l+m+n}q_0-T^lw\|^2\\
		+ \frac{1}{2}\|T^{l+m+n}q_0-T^mw\|^2-\frac{1}{4}\phi(\|T^lw-T^mw\|).
	\end{multline*}
	On the account of $\|w-T^nq_0\|<r$, for all $n\geq N_0$, we observe that $\|w-T^{m+n}q_0\|<r$ and $\|w-T^{l+n}q_0\|<r$ for each $n\geq N_0$ and $l,m\in \mathbb{N}$.
	As $T$ is locally asymptotic nonexpansive then form the last inequality yields that,
	\begin{multline*}
		\|T^{l+m+n}q_0-\frac{1}{2}(T^lw+T^mw)\|^2\leq\frac{1}{2}(\beta_l)^2\|w-T^{m+n}q_0\|^2\\
		+\frac{1}{2}(\beta_m)^2\|w-T^{l+n}q_0\|^2-\frac{1}{4}\phi(\|T^lw-T^mw\|).
	\end{multline*}
	Now taking limit supremum as $n\to \infty$ we have
	\begin{equation*}
		f\left(\frac{T^mw+T^lw}{2}\right)\leq \frac{1}{2}\left\{(\beta_l)^2+(\beta_m)^2\right\}f(w)-\frac{1}{4}\phi(\|T^lw-T^mw\|).
	\end{equation*}
	Due to the fact that $w$ is the minimizer of $f$, then 
	$$f(w)\leq \frac{1}{2}\left\{(\beta_l)^2+(\beta_m)^2\right\}f(w)-\frac{1}{4}\phi(\|T^lw-T^mw\|).$$
	Now taking $l,m\to \infty$ we get that $\{T^nw\}_n$ is Cauchy. As $Q$ is a closed subset of a complete normed linear space $U$, then $\{T^nw\}_n$ is convergent. Let $\{T^nw\}_n$ converges to $q^*\in Q$.

	Because of $T^nw\to q^*$, then we achieve $N\in \mathbb{N}$ in order that $\|T^nw-q^*\|<r$ for each $n\geq N$. Now for each $n\geq N$, 
	$$\|T^{n+1}w-Tq^*\|\leq \beta_1\|T^nw-q^*\|.$$
	Taking $n\to \infty$, the last inequation yields that $Tq^*=q^*$.
	
	Next our claim that the collection of fixed points $Fix(T)$ is a closed set. Choose $\{p_n\}_n$ in $Fix(T)$ and $p_n\to p$. As a consequence we obtain $M_0\in \mathbb{N}$ in order that $\|p_n-p\|<r\,\,\forall n\geq M_0$. Now for each $n\geq M_0$,
	\begin{eqnarray*}
		\|Tp-p\|&\leq& \|p-p_n\|+\|p_n-Tp\|\\
		&\leq&\|p-p_n\|+\|Tp_n-Tp\|\qquad[\textrm{$\because ~p_n\in Fix(T)$}]\\
		&\leq& \|p-p_n\|+\beta_1\|p_n-p\|.
	\end{eqnarray*}
	Taking limit $n\to\infty$, we conclude that $p\in Fix(T)$. Thus the collection of fixed point is a closed set.
\end{proof}
Kirk and Xu \cite[Theorem 3.5]{xu} generalized the Goebel-Kirk \cite{goebel} result for the pointwise asymptotically nonexpansive map \cite{xu} in the year 2008. In the succeeding theorem, we improved the Kirk-Xu's \cite{xu} result for uniformly local pointwise asymptotic nonexapnsive map and the proof line is followed from the aforementioned Theorem \ref{thm1}.
\begin{theorem}
	Let us assume $(U,\|.\|)$ is a uniformly convex Banach space and $Q\subseteq U$ is non-void bounded closed convex. Suppose that $T:Q\to Q$ in order that for each $p,q\in Q$ having $\|p-q\|<r$ implies
	\begin{equation*}
		\|T^np-T^nq\|\leq \alpha_n(q)\|p-q\|~\text{for any}~n\in \mathbb{N},
	\end{equation*}
	where $\alpha_n\to 1$ pointwise on $Q$. Then the occurrence of fixed point for $T$ is guaranteed if there is $q_0\in Q$ in order that the asymptotic radius $\rho$ of $\{T^nq_0\}_n$ is less than $r$.
\end{theorem}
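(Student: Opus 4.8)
The plan is to transcribe the proof of Theorem \ref{thm1} almost verbatim, the single new observation being that the limiting argument only ever needs the coefficients $\alpha_n$ to tend to $1$ at \emph{one} prescribed point, namely the asymptotic centre $w$ produced below (and at the limit $q^*$ in the final step). Hence pointwise convergence $\alpha_n\to 1$ is exactly the amount of control that is required.

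First I would fix $q_0$ and set $f(q)=\limsup_{n\to\infty}\|T^nq_0-q\|$. Since $Q$ is bounded, closed and convex in a uniformly convex (hence reflexive) Banach space, $Q$ is weakly compact and $f$ attains its minimum at some $w\in Q$ with $f(w)=\rho<r$; exactly as in Theorem \ref{thm1} this yields $N_0$ with $\|w-T^nq_0\|<r$ for all $n\ge N_0$, and therefore $\|w-T^{l+n}q_0\|<r$ and $\|w-T^{m+n}q_0\|<r$ for all $n\ge N_0$ and all $l,m\in\mathbb{N}$.

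Next, using boundedness of $Q$, I would invoke Lemma \ref{lem1} with $c=\tfrac12$ applied to the vectors $T^{l+m+n}q_0-T^lw$ and $T^{l+m+n}q_0-T^mw$. The crucial step is then to estimate the two right-hand norms by the defining inequality \emph{with second argument} $w$: since $\|T^{m+n}q_0-w\|<r$ and $\|T^{l+n}q_0-w\|<r$,
\begin{equation*}
\|T^{l+m+n}q_0-T^lw\|\le\alpha_l(w)\,\|T^{m+n}q_0-w\|,\qquad \|T^{l+m+n}q_0-T^mw\|\le\alpha_m(w)\,\|T^{l+n}q_0-w\|.
\end{equation*}
Substituting and passing to $\limsup_{n\to\infty}$ (the index shift by $l+m$ leaves the $\limsup$ unchanged), then using the minimality of $w$, I expect to arrive at
\begin{equation*}
f(w)\le\tfrac12\big\{\alpha_l(w)^2+\alpha_m(w)^2\big\}\,f(w)-\tfrac14\,\phi(\|T^lw-T^mw\|),
\end{equation*}
the exact analogue of the corresponding display in Theorem \ref{thm1} with $\beta_l,\beta_m$ replaced by $\alpha_l(w),\alpha_m(w)$. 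Because $\alpha_n\to 1$ pointwise, we have $\alpha_l(w),\alpha_m(w)\to 1$, so letting $l,m\to\infty$ forces $\phi(\|T^lw-T^mw\|)\to 0$; the properties of $\phi$ then give that $\{T^nw\}_n$ is Cauchy, hence convergent to some $q^*\in Q$ as $Q$ is closed in the complete space $U$.

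Finally, once $\|T^nw-q^*\|<r$ for large $n$, the defining inequality with $n=1$ and second argument $q^*$ gives $\|T^{n+1}w-Tq^*\|\le\alpha_1(q^*)\|T^nw-q^*\|\to 0$, while $T^{n+1}w\to q^*$, so $Tq^*=q^*$. The only point that needs care — and the ``obstacle'' in an otherwise mechanical repetition of Theorem \ref{thm1} — is precisely the bookkeeping of \emph{which} point plays the role of the second argument of $\alpha_n$: it must be the fixed minimizer $w$ (respectively the limit $q^*$), so that the hypothesis is applied only along the single orbit where pointwise convergence to $1$ is available.
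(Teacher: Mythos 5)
Your proposal is correct and follows exactly the route the paper intends: the paper gives no separate proof for this theorem, stating only that ``the proof line is followed from Theorem \ref{thm1}'', and your transcription supplies precisely the one necessary observation, namely that the coefficient in the defining inequality is always evaluated at the fixed second argument $w$ (and at $q^*$ in the last step), so pointwise convergence $\alpha_n(w)\to 1$ suffices.
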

In the year 2011, Kozlowski \cite{kozlowski} derived the existence of fixed points for the mappings satisfied asymptotically pointwise nonexpansive \cite{kozlowski} criteria on a uniformly convex Banach space having Opial \cite{kozlowski} property. The author \cite{kozlowski} deduced the below mentioned theorem.
\begin{theorem}\cite{kozlowski}\label{koz}
	Suppose that $Q$ is a non-void bounded closed convex subset of a uniformly convex Banach space $U$ having the Opial property and a map $T:Q\to Q$ in order that for every $p,q\in Q$,
	\begin{equation}\label{koz_eq}
		\|T^np-T^nq\|\leq \alpha_n(q)\|p-q\|~\text{for any}~n\in \mathbb{N},
	\end{equation}
	where $\limsup_{n\to \infty}\alpha_n(q)\leq 1$ and $\sum_{n=1}^{\infty}(\alpha_n(q)-1)<\infty$ for each $q\in Q$. Also assume that, $\{q_n\}_n\in Q$ in order that $q_n\xrightarrow[]{w}w$ for some $w\in Q$ and $\|Tq_n-q_n\|\to 0$. Then $w$ is the fixed point of $T$.	
\end{theorem}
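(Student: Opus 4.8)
The plan is to identify the prescribed weak limit $w$ as the asymptotic centre of $\{q_n\}_n$, to prove that the orbit $\{T^m w\}_m$ converges \emph{strongly} to $w$, and then to invoke continuity of $T$ to force $Tw=w$. Write $r(x)=\limsup_{n\to\infty}\|q_n-x\|$ for $x\in Q$ and $\rho=\inf_{x\in Q}r(x)$. Since $q_n\to w$ weakly and $U$ enjoys the Opial property, Lemma \ref{lem2} gives that $w$ lies in the asymptotic centre of $\{q_n\}_n$, that is $r(w)=\rho$. Moreover, taking $n=1$ in (\ref{koz_eq}) shows $\|Tp-Tq\|\le\alpha_1(q)\|p-q\|$, so $T$ is continuous, a fact reserved for the final step.

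First I would establish the auxiliary fact that, for each fixed $m\in\mathbb{N}$, $\|T^{m}q_n-q_n\|\to 0$ as $n\to\infty$. This comes from the telescoping estimate
\[
\|T^{m}q_n-q_n\|\le\sum_{i=0}^{m-1}\|T^{i}(Tq_n)-T^{i}(q_n)\|\le\Big(\sum_{i=0}^{m-1}\alpha_i(q_n)\Big)\|Tq_n-q_n\|,
\]
together with $\|Tq_n-q_n\|\to 0$ and the boundedness of the coefficients $\alpha_i$ on the bounded set $Q$. With this in hand, for each fixed $m$ the relation $\|T^m q_n-q_n\|\to 0$ lets me replace $q_n$ by $T^m q_n$ under the $\limsup$, so that using (\ref{koz_eq}),
\[
r(T^m w)=\limsup_{n\to\infty}\|q_n-T^m w\|=\limsup_{n\to\infty}\|T^m q_n-T^m w\|\le\alpha_m(w)\,\limsup_{n\to\infty}\|q_n-w\|=\alpha_m(w)\,r(w).
\]
Letting $m\to\infty$ and using $\limsup_m\alpha_m(w)\le 1$ gives $\limsup_{m\to\infty} r(T^m w)\le r(w)$; since $r(w)=\rho\le r(T^m w)$ for every $m$, it follows that $r(T^m w)\to\rho$, i.e. $\{T^m w\}_m$ is a minimizing sequence for $r$.

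Next I would turn this minimizing property into strong convergence by the uniform convexity inequality of Lemma \ref{lem1}. Applying it with $p=q_n-T^m w$, $q=q_n-w$ and $c=\tfrac12$ (both norms bounded by $\operatorname{diam} Q$), noting $cp+(1-c)q=q_n-\tfrac12(T^m w+w)$ and $p-q=w-T^m w$, and passing to $\limsup_n$ yields
\[
\rho^2\le r\Big(\tfrac{T^m w+w}{2}\Big)^2\le\tfrac12 r(T^m w)^2+\tfrac12 r(w)^2-\tfrac14\phi(\|T^m w-w\|).
\]
Since $r(T^m w)\to\rho=r(w)$, the right-hand side forces $\phi(\|T^m w-w\|)\to 0$; as $\phi$ is continuous with $\phi^{-1}(0)=\{0\}$ and $\{\|T^m w-w\|\}_m$ is bounded, a subsequence argument gives $\|T^m w-w\|\to 0$, i.e. $T^m w\to w$. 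Finally, continuity of $T$ gives $T^{m+1}w=T(T^m w)\to Tw$, while $T^{m+1}w\to w$ as a shift of the same convergent orbit, so uniqueness of strong limits yields $Tw=w$.

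The main obstacle is the auxiliary step $\|T^m q_n-q_n\|\to 0$: the telescoping itself is routine, but it relies on controlling $\sum_{i=0}^{m-1}\alpha_i(q_n)$ uniformly in $n$, i.e. on the coefficient functions being bounded on $Q$, a point that must be secured from the precise definition of asymptotic pointwise nonexpansiveness. The uniform convexity step above — that a minimizing sequence for $r$ must converge to the asymptotic centre $w$ — is the other place where the geometry of $U$ is genuinely exploited, while everything else is triangle-inequality bookkeeping combined with Lemma \ref{lem2}, which is precisely what pins the asymptotic centre to the given weak limit $w$.
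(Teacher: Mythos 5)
Your route is essentially the one the paper itself uses: Theorem \ref{koz} is only quoted from Kozlowski, but the proof the paper supplies for its local analogue immediately afterwards follows your plan step for step --- Lemma \ref{lem2} to identify $w$ with the asymptotic centre, an auxiliary lemma giving $\|T^m q_n-q_n\|\to 0$ (the paper's Lemma \ref{helping lemma}), the estimate $f(T^m w)\le \beta_m f(w)$ forcing $\{T^m w\}_m$ to be a minimizing sequence, and then Lemma \ref{lem1} to extract $T^m w\to w$ and conclude by continuity of $T$. The only real issue is the one you flag yourself and leave unresolved: the telescoping bound requires $\sup_n \alpha_i(q_n)<\infty$ for each fixed $i$, and the hypotheses as stated ($\limsup_{m}\alpha_m(q)\le 1$ and $\sum_m(\alpha_m(q)-1)<\infty$ \emph{pointwise} in $q$) control $\alpha_i(q)$ only as the index varies with $q$ fixed, not as $q$ runs through the sequence $\{q_n\}_n$ with $i$ fixed; so boundedness of each $\alpha_i$ on $Q$ must be imported from Kozlowski's actual definition of the class (in the paper's local version this is a non-issue because the $\beta_i$ are constants). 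Everything else --- the replacement of $q_n$ by $T^m q_n$ under the $\limsup$, the deduction $r(T^m w)\to\rho$, the uniform convexity inequality, and the subsequence argument showing $\phi\bigl(\|T^m w-w\|\bigr)\to 0$ implies $\|T^m w-w\|\to 0$ --- is correct and matches the paper's computation.
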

Now we improve the above mentioned Theorem \ref{koz} due to Kozlowski \cite{kozlowski} for the mappings satisfied the equation $(\ref{koz_eq})$ locally. For this purpose we establish the upcoming lemma.
\begin{lemma}\label{helping lemma}
	Let $Q$ be a non-void subset of a normed linear space $(U,\|.\|)$ and a map $T:Q\to Q$ fulfils the equation $(\ref{local_nonexpansive})$. Let $\{q_n\}_n\in Q$ such that $\|Tq_n-q_n\|\to 0$ as $n\to \infty$. Then for each $m\in \mathbb{N}$, $\|T^mq_n-q_n\|\to 0$ as $n\to \infty$.
\end{lemma}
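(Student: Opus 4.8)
The plan is to proceed by induction on $m$. The base case $m=1$ is precisely the hypothesis $\|Tq_n-q_n\|\to 0$, so nothing needs to be proved there. For the inductive step I would fix $m\in\mathbb{N}$, assume that $\|T^mq_n-q_n\|\to 0$ as $n\to\infty$, and aim to deduce the same statement with $m$ replaced by $m+1$.

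The key idea is to split the quantity $\|T^{m+1}q_n-q_n\|$ by the triangle inequality as
\begin{equation*}
\|T^{m+1}q_n-q_n\|\leq \|T^{m+1}q_n-T^mq_n\|+\|T^mq_n-q_n\|.
\end{equation*}
The second term tends to $0$ directly by the inductive hypothesis. For the first term I would rewrite $T^{m+1}q_n-T^mq_n=T^m(Tq_n)-T^m(q_n)$ and apply the uniformly local asymptotic nonexpansive condition $(\ref{local_nonexpansive})$ with $p=Tq_n$ and $q=q_n$, which gives $\|T^m(Tq_n)-T^m(q_n)\|\leq \beta_m\|Tq_n-q_n\|$. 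Since $m$ is fixed, $\beta_m$ is a constant, and because $\|Tq_n-q_n\|\to 0$, this term tends to $0$ as well. Adding the two estimates yields $\|T^{m+1}q_n-q_n\|\to 0$, which completes the induction.

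The only point requiring care is the applicability of $(\ref{local_nonexpansive})$: the inequality is local and holds only once the two arguments lie within distance $r$. Here this is automatic, since $\|Tq_n-q_n\|\to 0$ forces $\|Tq_n-q_n\|<r$ for all sufficiently large $n$, so the estimate $\|T^m(Tq_n)-T^m(q_n)\|\leq \beta_m\|Tq_n-q_n\|$ is valid for every large $n$, which is all that is needed to pass to the limit. I expect this to be the main (and essentially only) obstacle, and it is mild; the remainder is a routine triangle-inequality argument with a fixed constant $\beta_m$.
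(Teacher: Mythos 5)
Your proposal is correct and is essentially the paper's argument: the paper writes the telescoping sum $\|T^mq_n-q_n\|\leq\sum_{j}\|T^{j}q_n-T^{j-1}q_n\|$ all at once and bounds each term by $\beta_{j-1}\|Tq_n-q_n\|$ using the local condition (valid since $\|Tq_n-q_n\|<r$ for large $n$), whereas you unroll the same decomposition as an induction on $m$. The key estimate $\|T^{j}(Tq_n)-T^{j}(q_n)\|\leq\beta_{j}\|Tq_n-q_n\|$ and the handling of the locality restriction are identical in both.
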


\begin{proof}
	On the account of $\|Tq_n-q_n\|\to 0$ as $n\to \infty$, then there is $N_0\in \mathbb{N}$ in order that $\|Tq_n-q_n\|<r$ for each $n\geq N_0$. Now for every $n\geq N_0$,
	\begin{small}
		\begin{equation*}
			\begin{split}
				\|T^mq_n-q_n\|&\leq\|T^mq_n-T^{m-1}q_n\|+\|T^{m-1}q_n-T^{m-2}q_n\|+\cdots+\|Tq_n-q_n\|\\
				&\leq \sum_{j=2}^{m-1}\|T^{j-1}q_n-T^jq_n\|+\|Tq_n-q_n\|\\
				&\leq \left(1+\sum_{j=2}^{m-1}\beta_j\right)\|Tq_n-q_n\|.~~[\because\|Tq_n-q_n\|<r]
			\end{split}
		\end{equation*}
	\end{small}	
	As the sequence $\{\beta_n\}_n$ converges to $1$, then it is bounded, hence there is $M>0$ such that $\sum_{j=1}^{m-1}\beta_j\leq M$. Consequently, for every $n\geq N_0$, the last inequality leads to
	\begin{equation*}
		\|T^mq_n-q_n\|\leq (1+M)\|Tq_n-q_n\|.
	\end{equation*}
	Now taking $n\to \infty$, we conclude that $\|T^mq_n-q_n\|\to 0$.
\end{proof}
With the help of the Lemma \ref{lem2}, in the succeeding theorem, we scrutinize fixed points for the mappings satisfying the equation $(\ref{koz_eq})$ locally. This result is in fact an extension of the Theorem \ref{koz} due to Kozlowski \cite{kozlowski}.

\begin{theorem}
	Let $Q$ be a closed convex bounded subset of a uniformly convex Banach space $U$ having Opial's condition. Suppose that a map $T:Q\to Q$ fulfils the equation $(\ref{local_nonexpansive})$.  Let $\{q_n\}_n\in Q$ having $q_n\xrightarrow[]{w}w$ and $q_n-Tq_n\to 0$, then $w$ lies is in the set of fixed point of $T$ if the asymptotic radius of $\{q_n\}_n$ relative $Q$ is less than $r$.
\end{theorem}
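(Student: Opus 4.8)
The plan is to run an asymptotic-centre argument and to exploit the Opial hypothesis only through G\'ornicki's Lemma~\ref{lem2}. First I would introduce the functional $f(q)=\limsup_{n\to\infty}\|q_n-q\|$ on $Q$, whose infimum over $Q$ is the asymptotic radius $\rho$. Since $q_n\to w$ weakly and $U$ is uniformly convex with the Opial condition, Lemma~\ref{lem2} places $w$ in the asymptotic centre of $\{q_n\}_n$, so that $f(w)=\rho$. The hypothesis $\rho<r$ then gives $\limsup_{n\to\infty}\|q_n-w\|<r$, whence there is $N_0$ with $\|q_n-w\|<r$ for all $n\ge N_0$; this is exactly the proximity needed to feed the pair $(q_n,w)$ into the local estimate~(\ref{local_nonexpansive}).

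Next I would show that the orbit $\{T^m w\}_m$ is a minimizing sequence for $f$. Because $q_n-Tq_n\to 0$, Lemma~\ref{helping lemma} yields $\|T^m q_n-q_n\|\to 0$ as $n\to\infty$ for each fixed $m$. Combining this with the local inequality $\|T^m q_n-T^m w\|\le\beta_m\|q_n-w\|$, valid for $n\ge N_0$, and passing to $\limsup_n$ (so that the $T^m q_n-q_n$ term drops out), I obtain
\[
f(T^m w)=\limsup_{n\to\infty}\|q_n-T^m w\|\le\beta_m\,\limsup_{n\to\infty}\|q_n-w\|=\beta_m\rho.
\]
Since $f(T^m w)\ge\rho$ by minimality of $w$ and $\beta_m\to 1$, this forces $f(T^m w)\to\rho$, i.e. $\{T^m w\}_m$ minimizes $f$ over $Q$.

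The crux, and the step I expect to be the main obstacle, is to upgrade this minimizing property to strong convergence $T^m w\to w$. Here I would invoke the uniform-convexity modulus from Lemma~\ref{lem1} with $c=\tfrac12$, applied to the vectors $q_n-T^m w$ and $q_n-w$, which are norm-bounded since $Q$ is bounded. Using convexity of $Q$ to keep the midpoint $\tfrac12(T^m w+w)$ inside $Q$ and taking $\limsup_n$, one gets
\[
\rho^2\le f\!\left(\tfrac{T^m w+w}{2}\right)^2\le\tfrac12 f(T^m w)^2+\tfrac12\rho^2-\tfrac14\phi(\|T^m w-w\|),
\]
so that
\[
\tfrac14\phi(\|T^m w-w\|)\le\tfrac12\big(f(T^m w)^2-\rho^2\big)\longrightarrow 0 .
\]
As $\phi$ is continuous with $\phi^{-1}(0)=\{0\}$, this yields $\|T^m w-w\|\to 0$. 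Finally, since~(\ref{local_nonexpansive}) with $n=1$ makes $T$ locally $\beta_1$-Lipschitz, once $\|T^m w-w\|<r$ we have $\|T^{m+1}w-Tw\|\le\beta_1\|T^m w-w\|\to 0$; thus $T^{m+1}w\to Tw$, while also $T^{m+1}w\to w$, and uniqueness of limits gives $Tw=w$. The delicate points to handle carefully are the use of the subadditivity $\limsup_n(a_n+b_n)\le\limsup_n a_n+\limsup_n b_n$ when passing to the limit, and the implication $\phi(\|T^m w-w\|)\to 0\Rightarrow\|T^m w-w\|\to 0$, which needs the strict positivity of $\phi$ away from $0$ together with the boundedness of the orbit; the role of a single step $\beta_1>1$ being harmless is precisely why iterating to $m\to\infty$ (rather than arguing directly from $T$) is essential.
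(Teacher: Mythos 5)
Your proposal is correct and follows essentially the same route as the paper's own proof: the type functional $f$, G\'ornicki's Lemma~\ref{lem2} to place $w$ in the asymptotic centre, Lemma~\ref{helping lemma} plus the local inequality to show $f(T^m w)\le\beta_m\rho$ and hence $f(T^m w)\to\rho$, and the modulus of convexity from Lemma~\ref{lem1} applied to the midpoint $\tfrac12(w+T^m w)$ to force $\|T^m w-w\|\to 0$ and conclude $Tw=w$ by local Lipschitz continuity. The only cosmetic difference is that the paper first records the identity $f(q)=\limsup_n\|T^m q_n-q\|$ before estimating $f(T^m w)$, whereas you fold that into a single triangle-inequality step.
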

\begin{proof}
	Let us define a type function $f:Q\to [0,\infty)$ by $f(q)=\limsup_{n}\|q_n-q\|$ for each $q\in Q$. As $q_n\xrightarrow[]{w}w$, then by the Lemma \ref{lem2}, $w$ resides in the asymptotic centre of $\{q_n\}_n$ relative to $Q$. That is,
	\begin{center}
		$f(w)=\inf_{q\in Q}f(q)$.
	\end{center}
	Again by our assumption, the asymptotic radius of $\{q_n\}_n$ relative to $Q$ is less than $r$, hence $\inf_{q\in Q}f(q)<r$, which indicates that $f(w)<r$. That is $\limsup_{n}\|q_n-w\|<r$. As a consequence we obtain $N_0\in \mathbb{N}$ such that $\|q_n-w\|<r$ for each $n\geq N_0$.
	
	On the other hand, $q_n-Tq_n\to 0$ as $n\to \infty$, therefore there is $N_1\in \mathbb{N}$ in order that $\|q_n-Tq_n\|<r$ for every $n\geq N_1$. Now for fix $m\in \mathbb{N}$ and $m>2$, then for $n\geq N_1$, 
	\begin{eqnarray*}
		\|T^mq_n-q\|&\leq &\|T^mq_n-T^{m-1}q_n\|+\cdots+\|q_n-Tq_n\|+\|q_n-q\|\\
		&\leq& \beta_{m-1}\|q_n-Tq_n\|+\cdots+\|Tq_n-q_n\|+\|q_n-q\|\\
		&=&\left(1+\sum_{i=1}^{m-1}\beta_{i}\right)\|Tq_n-q_n\|+\|q_n-q\|.
	\end{eqnarray*}
	Consequently,
	\begin{equation}\label{eqn2}
		\limsup_{n}\|T^mq_n-q\|\leq \limsup_{n}\|q_n-q\|=f(q).
	\end{equation}
	Again using the Lemma \ref{helping lemma}, we have 
	\begin{equation}\label{eqn3}
		\begin{aligned}\nonumber
			f(q)=\limsup_{n}\|q_n-q\|&\leq \limsup_{n}\|q_n-T^mq_n\|+\limsup_{n}\|T^mq_n-q\|\\
			&\leq \limsup_{n}\|T^mq_n-q\|.
		\end{aligned}
	\end{equation}
	Hence $(\ref{eqn2})$ and last inequation leads to $f(q)=\limsup_{n}\|T^mq_n-q\|$ for each $q\in Q$.
	Now for every $n\geq N_0$ and $m>2$,
	\begin{eqnarray*}
		f(T^mw)&=&\limsup_{n}\|T^mq_n-T^mw\|\\
		&\leq & \beta_m\limsup_{n}\|q_n-w\|\qquad\text{[$\because\|q_n-w\|<r$]}\\
		&=&\beta_m f(w).
	\end{eqnarray*}
	Taking limit $m\to\infty$ in the last inequality, 
	\begin{equation}\label{eqn4}
		\lim_{m\to \infty}f(T^mw)\leq f(w).
	\end{equation}
	Since $w$ is the minimizing point of $f$ in $Q$ then we conclude that 
	\begin{equation}\label{eqn5}
		\lim_{m\to \infty}f(T^mw)=f(w).
	\end{equation}
	As $U$ is uniformly convex Banach space, then applying the Lemma \ref{lem1}, we have a map $\phi:[0,\infty)\to [0,\infty)$ fulfils the following
	\begin{small}
		\begin{equation*}
			\|q_n-\frac{1}{2}\left(w+T^mw\right)\|^2\leq \frac{1}{2}\|q_n-w\|^2+ \frac{1}{2}\|q_n-T^mw\|^2-\frac{1}{4}\phi\left(\frac{1}{2}\|T^mw-w\|\right).
		\end{equation*} 
	\end{small}
	Taking limit sup as $n\to \infty$ it yields that, 
	\begin{equation*}
		f(w)^2\leq \frac{1}{2}f(w)^2+ \frac{1}{2}	f(T^mw)^2-\frac{1}{4}\phi\left(\frac{1}{2}\|T^mw-w\|\right).
	\end{equation*}
	Now take $m\to \infty$, we have $\phi\left(\frac{1}{2}\|T^mw-w\|\right)=0$, hence using the continuity of $T$ we conclude that $Tw=w$.
\end{proof}
The well known Picard's method creates a sequence $\{q_n\}_n\in U$, successively by $q_{n+1}=Tq_n$ for every $n\in \mathbb{N}$ and Banach established that this sequence converges to a unique fixed point of $T$ if the map $T$ is contraction on complete metric space $U$. In the year 1991, a more generalized iterative scheme $q_{n+1}=(1-\alpha_n)q_n+\alpha_nT^nq_n$, where $\alpha_n\in (0,1)$, $n\in \mathbb{N}$, and $T$ is asymptotically nonexpansive map, was introduced by Schu \cite{schu}.  For this sequence the author \cite[Theorem 1.4]{schu} derived the occurrence of approximate fixed point of the map $T$ on a subset of a Hilbert space. In the below stated theorem, we extend this iterative sequence by considering the map $T$ as uniformly local asymptotically nonexpansive and we further establish the presence of approximate fixed point for the same map $T$.
\begin{theorem}
	Let $Q$ be a non-void bounded closed convex subset of a uniformly convex Banach space $(U,\|.\|)$. Suppose that $T:Q\to Q$ is a uniformly local asymptotic nonexpansive map. Let $q_1\in Q$ and $q_{n+1}=(1-\gamma_n)q_n+\gamma_nT^nq_n$, where $0<a\leq \gamma_n\leq b<1$ and $\sum_{n\in \mathbb{N}}\gamma_n<\infty$. Then $\lim_{n\to \infty}\|q_n-Tq_n\|=0$, if there is $q_0\in Q$ such that the asymptotic radius $\rho$ of $\{T^nq_0\}_n$ is less than $r$.
\end{theorem}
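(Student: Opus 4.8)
The plan is to transplant Schu's classical argument for the modified Mann iteration into the present uniformly local setting, the engine being the uniform-convexity inequality of Lemma~\ref{lem1}. First I would invoke Theorem~\ref{thm1}: the hypothesis that the asymptotic radius $\rho$ of $\{T^nq_0\}_n$ is smaller than $r$ guarantees a fixed point $p\in Q$, and I would anchor the entire analysis at this $p$. Since $T^np=p$, whenever $\|q_n-p\|<r$ the defining relation~(\ref{local_nonexpansive}) yields $\|T^nq_n-p\|=\|T^nq_n-T^np\|\le\beta_n\|q_n-p\|$, and this single estimate drives everything that follows.

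The core step is to prove $\lim_{n\to\infty}\|T^nq_n-q_n\|=0$. Writing $q_{n+1}-p=\gamma_n(T^nq_n-p)+(1-\gamma_n)(q_n-p)$ and applying Lemma~\ref{lem1} with $c=\gamma_n$ and $a=\operatorname{diam}(Q)$ gives
\[
\|q_{n+1}-p\|^2\le\gamma_n\|T^nq_n-p\|^2+(1-\gamma_n)\|q_n-p\|^2-\gamma_n(1-\gamma_n)\phi(\|T^nq_n-q_n\|).
\]
Inserting $\|T^nq_n-p\|\le\beta_n\|q_n-p\|$ and rearranging produces
\[
\gamma_n(1-\gamma_n)\phi(\|T^nq_n-q_n\|)\le\|q_n-p\|^2-\|q_{n+1}-p\|^2+\gamma_n(\beta_n^2-1)\|q_n-p\|^2.
\]
Since $a\le\gamma_n\le b$, one has $\gamma_n(1-\gamma_n)\ge a(1-b)>0$; summing over $n$, the first difference telescopes to a bounded quantity, while the boundedness of $Q$ and the summability $\sum_n(\beta_n-1)<\infty$ control the last term. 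Hence $\sum_n\phi(\|T^nq_n-q_n\|)<\infty$, so $\phi(\|T^nq_n-q_n\|)\to0$, and continuity of $\phi$ together with $\phi^{-1}(0)=\{0\}$ forces $\|T^nq_n-q_n\|\to0$.

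Next I would pass from the diagonal estimate to $\|Tq_n-q_n\|\to0$. Because $\|q_{n+1}-q_n\|=\gamma_n\|T^nq_n-q_n\|\le b\,\|T^nq_n-q_n\|\to0$, I split
\[
\|q_n-Tq_n\|\le\|q_n-q_{n+1}\|+\|q_{n+1}-T^{n+1}q_{n+1}\|+\|T^{n+1}q_{n+1}-T^{n+1}q_n\|+\|T^{n+1}q_n-Tq_n\|,
\]
and, writing $T^{n+1}q_n=T(T^nq_n)$, bound the last two summands via~(\ref{local_nonexpansive}) by $\beta_{n+1}\|q_{n+1}-q_n\|$ and $\beta_1\|T^nq_n-q_n\|$; every term then tends to $0$.

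The main obstacle is the locality restriction: each use of~(\ref{local_nonexpansive}) is licit only when its two arguments lie within distance $r$. For the final decomposition this is harmless, since $\|q_{n+1}-q_n\|$ and $\|T^nq_n-q_n\|$ both vanish and so are eventually below $r$. The delicate point is the anchoring inequality $\|T^nq_n-p\|\le\beta_n\|q_n-p\|$, which needs $\|q_n-p\|<r$; here the hypothesis $\rho<r$ must be exploited beyond merely producing a fixed point. I would establish the Fej\'er-type bound $\|q_{n+1}-p\|\le(1+\gamma_n(\beta_n-1))\|q_n-p\|$ and the convergence of $\prod_n(1+\gamma_n(\beta_n-1))$ to show that the iterates stay within distance $r$ of $p$ for all large $n$. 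Securing this $r$-localization uniformly in $n$ is the step I expect to demand the most care.
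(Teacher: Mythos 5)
Your overall architecture --- anchor at the fixed point $q^*$ supplied by Theorem \ref{thm1}, apply Lemma \ref{lem1} to the convex combination $q_{n+1}-q^*=(1-\gamma_n)(q_n-q^*)+\gamma_n(T^nq_n-q^*)$, telescope to obtain $\sum_n\phi(\|T^nq_n-q_n\|)<\infty$, and then pass to $\|Tq_n-q_n\|\to0$ via the four-term split --- is exactly the paper's, and your final step (including the remark that $\|q_{n+1}-q_n\|$ and $\|T^nq_n-q_n\|$ are eventually below $r$, so locality is harmless there) matches the paper essentially verbatim. The divergence, and the gap, lies in how you control the term $\gamma_n\|T^nq_n-q^*\|^2$ after rearranging. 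You propose the anchoring inequality $\|T^nq_n-q^*\|\le\beta_n\|q_n-q^*\|$, which requires (i) the localization $\|q_n-q^*\|<r$ and (ii) the summability $\sum_n(\beta_n-1)<\infty$ to make the resulting error term summable. Neither is available: the definition of a uniformly local asymptotic nonexpansive map gives only $\beta_n\to1$, not summability of $\beta_n-1$; and your proposed Fej\'er bound $\|q_{n+1}-q^*\|\le(1+\gamma_n(\beta_n-1))\|q_n-q^*\|$ is circular, since it is itself derived from the very anchoring inequality it is meant to license. Even granting it, a convergent infinite product only yields $\|q_n-q^*\|\le C\|q_1-q^*\|$; the fixed point $q^*$ is manufactured from the unrelated point $q_0$ while $q_1$ is arbitrary in $Q$, so nothing forces $\|q_1-q^*\|$ to be small enough when $\mathrm{diam}(Q)\ge r$. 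The $r$-localization of the orbit at $q^*$, which you correctly flag as the delicate point, cannot be secured by this route.

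The paper sidesteps the issue entirely: it never applies (\ref{local_nonexpansive}) to the pair $(q_n,q^*)$. After discarding the nonpositive term $-\gamma_n\|q_n-q^*\|^2$, it bounds $\gamma_n\|T^nq_n-q^*\|^2\le\gamma_n M$ crudely with $M=\{\mathrm{diam}(Q)\}^2$ and invokes the hypothesis $\sum_n\gamma_n<\infty$ --- a hypothesis your proof never uses --- to make that contribution summable, so that only the telescoping difference and a finite sum remain. (One may object that $\sum_n\gamma_n<\infty$ is incompatible with $\gamma_n\ge a>0$, so the theorem as stated is vacuous; but that is a defect of the statement, not a licence to substitute the unstated assumption $\sum_n(\beta_n-1)<\infty$.) As written, your argument does not close.
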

\begin{proof}
	By the Theorem \ref{thm1}, we get that $q^*\in Q$ in order that $Tq^*=q^*$. Now, 
	\begin{eqnarray}
		\|q_{n+1}-q^*\|^2
		&=&\|(1-\gamma_n)(q_n-q^*)+\gamma_n(T^nq_n-q^*)\|^2.\label{eqn1}
	\end{eqnarray}
	Then by applying the Lemma \ref{lem1} on the equation $(\ref{eqn1})$, we obtain
	\begin{multline*}
		\|q_{n+1}-q^*\|^2\leq (1-\gamma_n)\|q_n-q^*\|^2+\gamma_n\|T^nq_n-q^*\|^2\\-\gamma_n(1-\gamma_n)\phi(\|T^nq_n-q_n\|),
	\end{multline*}
	which implies that 
	\begin{multline*}
		\gamma_n(1-\gamma_n)\phi(\|T^nq_n-q_n\|)\leq 
		\|q_n-q^*\|^2-\gamma_n\|q_n-q^*\|^2+\gamma_n\|T^nq_n-q^*\|^2\\
		-\|q_{n+1}-q^*\|^2.
	\end{multline*}
	Then using the bound of $\{\gamma_n\}_n$ we have
	\begin{equation*}
		a(1-b)\phi(\|T^nq_n-q_n\|)\leq \|q_n-q^*\|^2+\gamma_n\|T^nq_n-q^*\|^2-\|q_{n+1}-q^*\|^2.
	\end{equation*}
	Assume that $M=\{diam(Q)\}^2$. Consequently for each $n$,
	\begin{equation*}
		a(1-b)\phi(\|T^nq_n-q_n\|)\leq \|q_n-q^*\|^2+\gamma_nM-\|q_{n+1}-q^*\|^2.
	\end{equation*} 
	Now,
	\begin{small}
		\begin{equation*}
			\begin{split}
				a(1-b)\sum_{1}^{m}\phi(\|T^nq_n-q_n\|)&\leq\sum_{1}^{m}\left[\|q_n-q^*\|^2-\|q_{n+1}-q^*\|^2\right]
				+M\sum_{1}^{m}\gamma_n\\
				&=\|q_1-q^*\|^2-\|q_{m+1}-q^*\|^2 +M\sum_{1}^{m}\gamma_n\\
				&\leq\|q_1-q^*\|^2+M\sum_{1}^{m}\gamma_n.
			\end{split}
		\end{equation*}
	\end{small}
	Now as $m\to \infty$, we have $\sum_{1}^{\infty}\phi(\|T^nq_n-q_n\|)<\infty$. Hence the limiting value of $\phi(\|T^nq_n-q_n\|)\to 0$ implies that $\lim_{n\to \infty}\|T^nq_n-q_n\|=0$.
	
	Again, $\|q_{n+1}-q_n\|=\| (1-\gamma_n)q_n+\gamma_nT^nq_n-q_n\|=\gamma_n\|q_n-T^nq_n\|$. Then it follows that $\lim_{n\to \infty}\|q_{n+1}-q_n\|=0$. As a result there is $N_1\in \mathbb{N}$ such that $\|q_{n+1}-q_n\|<r$ for all $n\geq N_1$. Since $\lim_{n\to \infty}\|T^nq_n-q_n\|=0$, then there is $N_2\in \mathbb{N}$ in order that $\|T^nq_n-q_n\|<r$ for each $n\geq N_2$. Consider $N_3=\max\{N_1,N_2\}$, consequently 
	$$\|q_{n+1}-q_n\|<r,~~\|T^nq_n-q_n\|<r~~\forall n\geq N_3.$$
	Now for every $n\geq N_3$ we have
	\begin{eqnarray*}
		\|q_n-Tq_n\|&\leq& \|q_{n}-q_{n+1}\|+\|q_{n+1}-T^{n+1}q_{n+1}\|\\
		&\quad&+\|T^{n+1}q_{n+1}-T^{n+1}q_n\|+\|T^{n+1}q_n-Tq_n\|\\
		&\leq& \|q_{n+1}-q_n\|+\|q_{n+1}-T^{n+1}q_{n+1}\|\\
		&\quad&+\beta_{n+1}\|q_{n+1}-q_n\|+\beta_1\|T^nq_n-q_n\|
	\end{eqnarray*}
	Now $n\to \infty$ in the last inequality we have $\lim_{n\to \infty}\|q_n-Tq_n\|=0$.
\end{proof}
In the below stated part, we also derive the unique fixed point for uniformly local asyptotic contraction. 	

\begin{definition}
	A mapping $T:Q\to Q$ is said to be an uniformly asymptotic local contraction if for each $p,q\in Q$ and $n\geq 1$,
	\begin{equation}\label{local asyptotic contraction}
		\|p-q\|<r\implies\|T^np-T^nq\|\leq \beta_n\|p-q\|,
	\end{equation}
	where $\beta_n\to \beta$ and $\beta\in [0,1)$.
\end{definition}
Now we find a sufficient criteria for the occurrence of unique fixed point of any mappings meet the condition (\ref{local asyptotic contraction}). 

\begin{theorem}\label{thm1.1}
	Let $Q$ be a weakly compact convex subset of a Banach space $U$ and a map $T:Q\to Q$ be a uniformly asymptotic local contraction. Then $T$ possesses a unique fixed point if there is $q_0\in Q$ in order that the asymptotic radius of $\{T^nq_0\}_n$ relative to $Q$ is less than $r$.
\end{theorem}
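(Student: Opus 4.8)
The plan is to run the asymptotic centre argument as in the proof of Theorem~\ref{thm1}, but to exploit the strict bound $\beta<1$ so as to collapse the asymptotic radius to zero, rather than appealing to uniform convexity (which is unavailable here, since $U$ is merely a Banach space). First I would fix $q_0$ as in the hypothesis and define $f(q)=\limsup_{n\to\infty}\|T^nq_0-q\|$ on $Q$. Since each map $q\mapsto\|T^nq_0-q\|$ is convex and $1$-Lipschitz, the function $f$ is convex and $1$-Lipschitz, hence weakly lower semicontinuous; as $Q$ is weakly compact (and therefore bounded), $f$ attains its minimum at some $w\in Q$, and the hypothesis on the asymptotic radius gives $f(w)=\rho<r$. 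Consequently there is $N_0\in\mathbb{N}$ with $\|T^nq_0-w\|<r$ for all $n\geq N_0$.

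Next, for a fixed $k\in\mathbb{N}$ and every $n\geq N_0$, the pair $(T^nq_0,w)$ lies within distance $r$, so $(\ref{local asyptotic contraction})$ yields $\|T^{n+k}q_0-T^kw\|\leq\beta_k\|T^nq_0-w\|$. Taking $\limsup_{n\to\infty}$ and reindexing by $m=n+k$ gives $f(T^kw)\leq\beta_k f(w)$. Because $T^kw\in Q$ and $w$ minimises $f$, we also have $f(T^kw)\geq f(w)$, whence $f(w)\leq\beta_k f(w)$; choosing $k$ so large that $\beta_k<1$ (possible since $\beta_k\to\beta<1$) forces $f(w)=0$. Thus $T^nq_0\to w$ strongly. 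Since $(\ref{local asyptotic contraction})$ with $n=1$ makes $T$ locally Lipschitz, hence continuous, applying $T$ to $T^nq_0\to w$ gives $T^{n+1}q_0\to Tw$, while the same sequence converges to $w$; by uniqueness of limits $Tw=w$.

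The part I expect to require the most care is uniqueness, since the purely \emph{local} nature of $(\ref{local asyptotic contraction})$ forbids a direct comparison of two fixed points lying farther apart than $r$, and this is exactly where the convexity of $Q$ must enter. Given fixed points $w_1,w_2$, I would subdivide the segment $[w_1,w_2]\subseteq Q$ into equally spaced points $w_1=x_0,x_1,\dots,x_N=w_2$, with $N$ chosen so that $\|x_i-x_{i+1}\|=\|w_1-w_2\|/N<r$. Then $(\ref{local asyptotic contraction})$ applies to each consecutive pair, and the triangle inequality together with the telescoping of the collinear points gives, for every $n$, $\|w_1-w_2\|=\|T^nw_1-T^nw_2\|\leq\sum_{i=0}^{N-1}\|T^nx_i-T^nx_{i+1}\|\leq\beta_n\|w_1-w_2\|$. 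Letting $n\to\infty$ and using $\beta<1$ forces $\|w_1-w_2\|=0$, so the fixed point is unique. The recurring obstacle throughout is the bookkeeping needed to guarantee that each pair stays within radius $r$ before the contraction estimate is invoked.
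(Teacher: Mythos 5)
Your existence argument is essentially the paper's: the same type function $f(q)=\limsup_n\|T^nq_0-q\|$, the same minimizer $w$ obtained from weak compactness, the same observation that $\rho<r$ puts $T^nq_0$ within $r$ of $w$ eventually, and the same inequality $f(w)\leq f(T^kw)\leq\beta_k f(w)$ forcing $f(w)=0$ and hence $T^nq_0\to w=Tw$. (Your justification of why the minimizer exists, via convexity and weak lower semicontinuity of $f$, is more explicit than the paper's, which simply asserts nonemptiness of the asymptotic centre.) The uniqueness step is where you genuinely diverge, to your advantage: the paper takes an arbitrary finite $r$-chain $z_0=u,\dots,z_L=v$, and because $\sum_i\|z_{i-1}-z_i\|$ may exceed $\|u-v\|$ it must apply $T^n$ in $m$ successive blocks, checking at each stage that the images $T^{jn}z_{i-1},T^{jn}z_i$ stay within distance $r$, so as to accumulate a factor $(\beta+\varepsilon)^m\to 0$. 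You instead place the chain on the segment $[w_1,w_2]\subseteq Q$ (using convexity of $Q$, which the paper's uniqueness argument never explicitly invokes even though it is needed to keep the chain inside the domain of $T$), so that collinearity gives $\sum_i\|x_i-x_{i+1}\|=\|w_1-w_2\|$ exactly; then a single application of the local contraction to each adjacent pair yields $\|w_1-w_2\|\leq\beta_n\|w_1-w_2\|$ and letting $n\to\infty$ finishes. This one-pass version avoids the $(\beta+\varepsilon)$ bookkeeping entirely and is the cleaner of the two; both are correct.
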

\begin{proof}
	Define a type function $f:Q\to [0,\infty)$ in order that 
	\begin{equation}\label{eqn1.1}
		f(q)=\limsup_{n}\|T^nq_0-q\|~\textrm{for each}~ q\in Q.
	\end{equation}
	Due to the fact that $Q$ is weakly compact convex in the Banach space $U$, then the set  $A_C\{T^nq_0\}=\{w\in Q:f(w)=\min_{z\in Q}f(z)\}$ is non-void.
	
	Let $w\in A_C\{T^nq_0\}$, then $f(w)=\min_{x\in Q} f(x)$. Since the asymptotic radius of $\{T^nq_0\}_n$ is less than $r$, then we have $f(w)<r$, that is $\limsup_{n}\|T^nq_0-w\|<r$. As a consequence we obtain $N_0\in \mathbb{N}$ such that $\|T^nq_0-w\|<r$ for all $n\geq N_0$. Now for each $m\in \mathbb{N}$ and $n\geq N_0$,
	\begin{eqnarray*}
		f(T^mw)&=&\limsup_{n}\|T^nq_0-T^mw\|\\
		&=& \limsup_{n}\|T^{n+m}q_0-T^mw\|\\
		&\leq & \beta_m\limsup_{n}\|T^nq_0-w\|\qquad[\because\|T^nq_0-w\|<r]\\
		&=&\beta_mf(w).
	\end{eqnarray*}
	Therefore $f(w)\leq f(T^mw)\leq \beta_mf(w)$. Then taking limit $m\to \infty$, we have $f(w)\leq \beta f(w)<f(w)$, which is a contradiction. Hence $f(w)=0$ and consequently $f(T^mw)=0$ for each $m\in \mathbb{N}$. As  a result we get that $T^nq_0\to w$ and $T^nq_0\to Tw$, whenever $n\to \infty$. This implies that $Tw=w$.

	As $\beta<1$, then there is a positive $\varepsilon$ such that $\beta+\varepsilon<1$. Again $\beta_n\to \beta$, then there is $N_0\in \mathbb{N}$ meets $\beta_n<\beta+\varepsilon<1$ for each $n\geq N_0$.  
	
	Let $u,v\in Q$ be two fixed points of the map $T$. On the account of $U$ is a Banach space, then there is a finite path $(z_i)_{i=0}^L$ between $u$ and $v$ in order that $z_0=u$, $z_N=v$ and $\|z_{i-1}-z_i\|<r$ for every $i=1,2,\cdots, L$.  Now for each $n\geq N_0$ and $i$, we see that 
	\begin{equation*}
		\|T^nz_{i-1}-T^n{z_i}\|\leq \beta_n\|z_{i-1}-z_i\|<(\beta+\varepsilon)\|z_{i-1}-z_i\|<r.
	\end{equation*}
	Now for every $n\geq N_0$ and $m\in \mathbb{N}$,
	
	\begin{eqnarray*}
		\begin{aligned}
			\|u-v\|&=\|T^{nm}u-T^{nm}v\|\nonumber\\&= \|T^{nm}z_0-T^{nm}z_L\|\nonumber\\
			&\leq \|T^{nm}z_0-T^{nm}z_1\|+\cdots+\|T^{nm}z_{L-1}-T^{nm}z_L\|\\
			&\leq (\beta+\varepsilon)\|T^{(m-1)n}z_0-T^{(m-1)n}z_1\|+\cdots\cdots\\
			&\qquad\cdots\cdots+(\beta+\varepsilon)\|T^{(m-1)n}z_{L-1}-T^{(m-1)n}z_L\|\\
			&\cdots\hspace*{0.5cm}\cdots\hspace*{0.5cm}\cdots\\
			&\leq (\beta+\varepsilon)^m\|z_0-z_1\|+\cdots+(\beta+\varepsilon)^m\|z_{L-1}-z_L\|.
		\end{aligned}
	\end{eqnarray*}
	Taking $m\to \infty$ in the last inequation, we have $\|u-v\|=0$, that is $u=v$.
\end{proof}
Krik and Xu \cite[Theorem 3.1]{xu} also deduced the existence of fixed point for pointwise asymptotic contraction maps. In the succeeding theorem, we discuss about the occurrence of fixed point for uniformly local pointwise asymptotic contractive map and the proof line is followed from the aforementioned Theorem \ref{thm1.1}.
\begin{theorem}
	Let $Q$ be a weakly compact convex subset of a Banach space $U$ and $T:Q\to Q$ be a map in order that for each $p,q\in Q$ and $n\geq 1$,
	\begin{equation*}
		\|p-q\|<r\implies\|T^np-T^nq\|\leq \beta_n(p)\|p-q\|,
	\end{equation*}
	where $\beta_n\to \beta$ pointwise on $Q$ and $\beta:Q\to [0,1)$. Then $T$ possesses a unique fixed point if there is $q_0\in Q$ in order that the asymptotic radius of $\{T^nq_0\}_n$ relative to $Q$ is less than $r$.
\end{theorem}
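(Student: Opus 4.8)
The plan is to follow the two-part structure of Theorem \ref{thm1.1}: first produce a fixed point as an asymptotic center of the orbit $\{T^nq_0\}_n$, then establish uniqueness through a polygonal path. The only genuinely new feature is that the contraction factor now depends on the base point, so at every step I must anchor the estimate at a \emph{fixed} point rather than at a moving iterate.

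For existence, I would introduce the type function $f(q)=\limsup_n\|T^nq_0-q\|$ on $Q$. Since $f$ is weakly lower semicontinuous and $Q$ is weakly compact and convex, $f$ attains its minimum at some $w\in Q$, exactly as in Theorem \ref{thm1.1}. The hypothesis that the asymptotic radius is less than $r$ forces $f(w)<r$, hence $\|T^nq_0-w\|<r$ for all $n\ge N_0$. The key modification enters when estimating $f(T^mw)=\limsup_n\|T^{n+m}q_0-T^mw\|$: I would apply the local contraction to the ordered pair $(w,T^nq_0)$, so that by symmetry of the norm $\|T^mw-T^m(T^nq_0)\|\le\beta_m(w)\|w-T^nq_0\|$ with factor $\beta_m(w)$ \emph{independent of} $n$. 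Passing to $\limsup_n$ gives $f(T^mw)\le\beta_m(w)f(w)$, and since $w$ minimizes $f$, also $f(w)\le\beta_m(w)f(w)$. Letting $m\to\infty$ and using $\beta_m(w)\to\beta(w)<1$ yields $f(w)=0$, whence $f(T^mw)=0$ for every $m$. Thus $T^nq_0\to w$ and $T^nq_0\to Tw$, and by uniqueness of limits $Tw=w$.

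For uniqueness, let $u,v$ be two fixed points. Exploiting convexity of $Q$, I would take the points on the straight segment, $z_i=u+\tfrac{i}{L}(v-u)$, with $L$ large enough that $\|z_{i-1}-z_i\|=\tfrac1L\|u-v\|<r$; the total length is then exactly $\sum_{i=1}^{L}\|z_{i-1}-z_i\|=\|u-v\|$. Because $u,v$ are fixed, for every $n$ the triangle inequality together with the local contraction gives $\|u-v\|=\|T^nu-T^nv\|\le\sum_{i=1}^{L}\beta_n(z_{i-1})\|z_{i-1}-z_i\|$. As the sum is finite I may pass to the limit $n\to\infty$ termwise, using $\beta_n(z_{i-1})\to\beta(z_{i-1})$, to obtain $\|u-v\|\le\sum_{i=1}^{L}\beta(z_{i-1})\|z_{i-1}-z_i\|\le\beta^*\|u-v\|$, where $\beta^*=\max_{0\le i\le L-1}\beta(z_i)<1$ is a maximum of finitely many numbers each below $1$. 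Hence $\|u-v\|=0$ and $u=v$.

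The main obstacle, and the reason I deviate from the literal argument of Theorem \ref{thm1.1}, is that the iteration device used there — bounding $\|T^{nm}z_{i-1}-T^{nm}z_i\|$ by $(\beta+\varepsilon)^m\|z_{i-1}-z_i\|$ — does not transfer to the pointwise setting, since the intermediate iterates $T^{jn}z_{i-1}$ run through points at which $\beta$ need not be uniformly bounded away from $1$, so no single $\beta+\varepsilon<1$ controls all the factors. I sidestep this by using that $u$ and $v$ are already fixed (so $T^nu=u$, $T^nv=v$) together with the finiteness of the straight-segment partition, which reduces uniqueness to one passage $n\to\infty$ through finitely many factors $\beta(z_i)<1$. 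The parallel care in the existence step, anchoring the inequality at $w$ via norm symmetry so that $\beta_m(w)$ survives the $\limsup_n$, is the other place where the pointwise dependence of $\beta_n$ must be handled explicitly.
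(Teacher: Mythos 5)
Your proposal is correct. The paper gives no separate proof of this theorem, only the remark that ``the proof line is followed from Theorem \ref{thm1.1},'' and your existence argument is indeed the same as that proof: minimize the type function $f(q)=\limsup_n\|T^nq_0-q\|$ over the weakly compact convex $Q$, use the radius hypothesis to get $\|T^nq_0-w\|<r$ eventually, and derive $f(w)\leq f(T^mw)\leq\beta_m(w)f(w)$; your observation that one must take $p=w$ in the contraction inequality so that the factor $\beta_m(w)$ is independent of $n$ is exactly the (unstated) adaptation the paper's remark presupposes. Where you genuinely depart is in the uniqueness step, and your diagnosis is sound: the paper's device of iterating $m$ times along the chain to get a factor $(\beta+\varepsilon)^m$ relies on a single $N_0$ with $\beta_n<\beta+\varepsilon$ for all $n\geq N_0$, and in the pointwise setting the repeated applications would invoke $\beta_n$ at the moving iterates $T^{jn}z_{i-1}$, where no uniform bound below $1$ is available. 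Your replacement --- take the straight segment $z_i=u+\frac{i}{L}(v-u)$ (which lies in $Q$ by convexity, a point the paper's ``finite path'' phrasing leaves vague), apply the contraction once for each $n$, pass to the limit termwise in the finite sum, and bound by $\max_i\beta(z_i)<1$ --- is a clean single-pass argument. It does require that the total length of the chain equal $\|u-v\|$, which is why the straight segment (rather than an arbitrary $r$-chain) is essential here, whereas the paper's geometric iteration tolerates chains of arbitrary finite length; in exchange, your argument actually works in the pointwise setting, where a literal transcription of the paper's proof would not.
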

%

\section*{Acknowledgment}

The first author would like to acknowledge the Ministry of Human Resource Development, India for providing financial assistance during the research work.


\begin{thebibliography}{99}
	
	
	
	
	
\bibitem{khamsi}
M. R. Alfuraidan, M. A. Khamsi, A fixed point theorem for monotone asymptotically nonexpansive mappings, Proc. Amer. Math. Soc., 146 (2018) 2451--2456.


\bibitem{bose}	
S. C. Bose, Weak convergence to the fixed point of an asymptotically nonexpansive map, Proc. Amer. Math. Soc., 68 (1978) 305--308.	


\bibitem{brow}
F. E. Browder, Nonexpansive nonlinear operators in a Banach space, Proc. Nat. Acad. Sei. U.S.A., 54 (1965) 1041--1044.

\bibitem{edel1}
M. Edelstein, An extension of Banach's contraction principle, Proc. Amer. Math. Soc., 12 (1961) 7--10.

\bibitem{edelstein}
M. Edelstein, The construction of an asymptotic center with a fixed point property, Bull. Amer. Math. Soc., 78 (1972) 206--208.

\bibitem{edel}
M. Edelstein, On nonexpansive mappings, Proc. Amer. Math. Soc., 15 (1984) 689--695.




\bibitem{goebel}
K. Goebel, W. A. Kirk, A fixed point theorem for asymptotically nonexpansive mappings, Proc. Amer. Math. Soc., 35 (1972) 171--174.

\bibitem{gorn}
Jaroslaw G\'ornicki, Weak convergence theorems for asymptotically nonexpansive mappings in uniformly convex Banach spaces, Comment. Math. Univ. Carolinae 30, 2 (1989) 249--252.

\bibitem{Jach}
J. Jachymski, The contraction principle for mappings on a metric space with a graph, Proc. Amer. Math. Soc., 136 (2008) 1359--1373.


\bibitem{kozlowski}
W. M. Kozlowski, Fixed point iteration processes for asymptotic pointwise nonexpansive mappings in Banach spaces, J. Math. Anal. Appl., 377 (2011) 43--52.

\bibitem{xu}
W. A. Kirk, Hong-Kun Xu, Asymptotic pointwise contractions, Nonlinear Analysis, 69 (2008) 4706--4712.


\bibitem{opial}
Z. Opial, Weak convergence of the sequence of successive approximations for nonexpansive mappings, Bull. Amer. Math. Soc., 73 (1967) 591--597.




\bibitem{schu}
J. Schu, Iterative construction of fixed points of asymptotically nonexpansive mappings, J. Math. Anal. Appl., 158 (1991) 407--413.


\bibitem{mam} 
A. Sultana, V. Vetrivel, Fixed points of Mizoguchi-Takahashi contraction on a metric space with a graph and applications, J. Math. Anal. Appl., 417 (2014) 336--344.


	
\end{thebibliography}
\end{document}